\DeclareMathAlphabet{\mathpzc}{OT1}{pzc}{m}{it}
\newtheorem{theorem}{Theorem}[section]
\newcommand{\abs}[1]{\lvert#1\rvert}
\begin{document}
\title{A new proof of an equality associated with submatrices and eigenvector elements}
\author{Liguo He\thanks{Corresponding author}, \quad Guirong Song \thanks{E-mail address:  helg-lxy@sut.edu.cn(L.He), songgr2004.163.com(G. Song)},
\\{\footnotesize Dept. of Math., Shenyang University of Technology, Shenyang, 110870,  PR China}}
\date{}

\maketitle

\noindent \textbf{Abstract.} {\footnotesize{By using the methods of Cauchy-Binet type formula and adjugate matrix respectively, a wonderful equality relating to the elements of eigenvectors, the eigenvalues and the submatrix eigenvalues is proved in arXiv:1908.03795. In the note, we use matrix block to provide a new and shorter proof for the equality. }}

Let $A$ be a Hermite matrix with eigenvalues $\lambda_i(A)$ and normed eigenvectors $v_i$, whose eigenvector elements are written as $v_{i,j}$. Let $M_j$ be an $(n-1)\times (n-1)$ submatrix which is derived by deleting the jth row elements and the jth column elements, and its eigenvalues are written as $\lambda_k(M_j)$. As the main result in \cite{T13}, Tao et al. prove twice the following result applying the methods of Cauchy-Binet type formula and adjugate matrix, respectively.

\begin{theorem}
Let $A$ be an $n\times n$ Hermite matrix, then the following equality holds  $$ \abs{v_{i,j}}^2 \Pi_{k=1, k\neq i}^n (\lambda_i(A) - \lambda_k(A)) = \Pi_{k=1}^{n-1}(\lambda_i(A)-\lambda_k(M_j)).$$
\end{theorem}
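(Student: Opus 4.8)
The plan is to exploit the block structure of $A$ along the distinguished index $j$ together with the classical Schur-complement formula for determinants. After a harmless simultaneous permutation of the rows and columns I may assume $j=n$ and write
\[
A=\begin{pmatrix} M_n & b\\ b^{*} & a\end{pmatrix},\qquad b\in\mathbb{C}^{n-1},\quad a=a_{nn}\in\mathbb{R}.
\]
For a scalar $t$ that is neither an eigenvalue of $A$ nor of $M_n$, the matrix $tI_n-A$ and its leading block $tI_{n-1}-M_n$ are both invertible, so the Schur-complement identities for the determinant and for the bottom-right entry of the inverse of a $2\times2$ block matrix give simultaneously
\[
\det(tI_n-A)=\det(tI_{n-1}-M_n)\,\big[(t-a)-b^{*}(tI_{n-1}-M_n)^{-1}b\big],
\]
\[
\big[(tI_n-A)^{-1}\big]_{nn}=\frac{1}{(t-a)-b^{*}(tI_{n-1}-M_n)^{-1}b}=\frac{p_{M_n}(t)}{p_A(t)},
\]
where $p_A(t)=\det(tI_n-A)=\prod_{k=1}^{n}(t-\lambda_k(A))$ and $p_{M_n}(t)=\det(tI_{n-1}-M_n)=\prod_{k=1}^{n-1}(t-\lambda_k(M_n))$. (This is of course just the cofactor relation $[\operatorname{adj}(tI_n-A)]_{nn}=p_{M_n}(t)$, but the block viewpoint makes it transparent and self-contained.)

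The second ingredient is the spectral resolution of the Hermitian matrix $A$. Taking $v_1,\dots,v_n$ orthonormal, one has $(tI_n-A)^{-1}=\sum_{i=1}^{n}(t-\lambda_i(A))^{-1}v_iv_i^{*}$, and reading off the $(n,n)$ entry,
\[
\big[(tI_n-A)^{-1}\big]_{nn}=\sum_{i=1}^{n}\frac{\abs{v_{i,n}}^{2}}{t-\lambda_i(A)}.
\]
Comparing this with the block expression $p_{M_n}(t)/p_A(t)$ and extracting the residue at $t=\lambda_i(A)$ — that is, multiplying through by $t-\lambda_i(A)$ and letting $t\to\lambda_i(A)$ — annihilates every summand on the spectral side except the $i$-th, leaving $\abs{v_{i,n}}^{2}$, while on the other side it yields $\prod_{k=1}^{n-1}(\lambda_i(A)-\lambda_k(M_n))\big/\prod_{k\ne i}(\lambda_i(A)-\lambda_k(A))$; clearing the denominator gives exactly the asserted equality.

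The only genuine subtlety, and the step I expect to require care, is that this residue computation presupposes that $\lambda_i(A)$ is a simple eigenvalue of $A$ and does not lie in the spectrum of $M_n$, so that $p_A$ has a simple zero at $\lambda_i(A)$ while $p_{M_n}$ is regular and nonzero there. The degenerate cases must be handled separately. If $\lambda_i(A)$ is a repeated eigenvalue of $A$, then $\prod_{k\ne i}(\lambda_i(A)-\lambda_k(A))=0$, so the left-hand side vanishes, and Cauchy interlacing for the principal submatrix $M_n$ — under deletion of a row and column the multiplicity of an eigenvalue can drop by at most one — forces $\lambda_i(A)$ into the spectrum of $M_n$ as well, so the right-hand side also vanishes and the identity reads $0=0$. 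The remaining situation, in which $\lambda_i(A)$ is simple but accidentally coincides with some $\lambda_k(M_n)$, can be settled either by pushing the same interlacing bookkeeping one step further (it forces $v_{i,n}=0$, matching the vanishing right-hand side) or, more cleanly, by a density argument: both sides are continuous in $A$ on the open dense set of Hermitian matrices with simple spectrum and $\operatorname{spec}(A)\cap\operatorname{spec}(M_j)=\varnothing$, on which the $i$-th normalized eigenvector is also continuous up to an irrelevant phase, so the general case follows from the generic one by a limiting argument.
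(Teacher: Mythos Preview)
Your argument is correct. Formally the paper proves this theorem only by citation to \cite{T13}, so there is nothing to compare against at that level; the paper's actual contribution is the block-matrix proof of the reduced form (with $\lambda_i(A)=0$, $i=j=n$) given immediately afterwards as Theorem~2, and that is the relevant comparison.

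Compared with that argument your route is genuinely different. The paper writes the unitary diagonalization $A=U\Lambda U^{-1}$ in $2\times2$ block form, reads off $M_n=U_n\Lambda_n\bar U_n'$ directly, and then reduces everything to the scalar identity $\det(U_n\bar U_n')=\abs{u_{n,n}}^2$, which it obtains from the Sylvester-type determinant identity $\det(E_{n-1}-C_1\bar C_1')=\det(E_1-\bar C_1'C_1)$ together with the unit length of the last column of $U$; no resolvent and no characteristic polynomial appear. You instead compute the $(n,n)$ entry of the resolvent $(tI-A)^{-1}$ in two ways---once by the cofactor/Schur-complement formula as $p_{M_n}(t)/p_A(t)$, once by the spectral resolution---and extract the result as a residue. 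This is essentially the adjugate-matrix proof that the paper itself attributes to \cite{T13} in its opening paragraph, so while valid it is precisely one of the arguments the present note is advertising an alternative to. The paper's approach buys a one-line determinant calculation at a single eigenvalue and needs no limits or degeneracy analysis; your approach treats all eigenvalues at once and makes the degenerate $0=0$ case transparent via interlacing. One minor simplification of your write-up: once you clear denominators to the polynomial identity $p_{M_n}(t)=\sum_k\abs{v_{k,n}}^2\prod_{\ell\ne k}(t-\lambda_\ell(A))$, evaluating at $t=\lambda_i(A)$ already works whenever $\lambda_i(A)$ is simple, so the separate worry about $\lambda_i(A)\in\operatorname{spec}(M_n)$ and the continuity argument are unnecessary.
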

\begin{proof}
See \cite[Lemma 2]{T13}.
\end{proof}

Because there are the same eigenvalues between the similar matrices, as pointed out in \cite{T13}, the above equality proofs regarding $A$ and $A - \lambda_i(A)E_n$ are equivalent, and so we may take $\lambda_i(A)$ to be zero, further choose $i = n, j=1$, and thus obtain the following reduction form

$$ \abs{v_{n,1}}^2\Pi_{k=1}^{n-1}\lambda_k(A) = \Pi_{k=1}^{n-1}\lambda_k(M_1).$$

In this note, we actually take $i = n, j = n$ and prove the following reduction form

$$ \abs{v_{n,n}}^2\Pi_{k=1}^{n-1}\lambda_k(A) = \Pi_{k=1}^{n-1}\lambda_k(M_n).$$

Since $M_n$ is also a Hermite matrix, we have $\abs{M_n} = \Pi_{k=1}^{n-1}\lambda_k(M_n)$.

\begin{theorem}
Let $A$ be an $n\times n$ Hermite matrix with $\lambda_n = 0$, then it is true that $\abs{M_n} = \abs{\Lambda_n}\abs{u_{n,n}}^2$.
\end{theorem}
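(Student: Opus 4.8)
The plan is to avoid cofactor gymnastics and instead read $M_n$ off a single block of the spectral decomposition of $A$. Write $A=U\Lambda U^{*}$ with $U$ unitary whose columns are the normed eigenvectors $v_1,\dots,v_n$ and $\Lambda=\operatorname{diag}(\lambda_1(A),\dots,\lambda_{n-1}(A),0)$; the hypothesis $\lambda_n=0$ enters precisely here, and the earlier reduction to $i=j=n$ was made exactly so that the last diagonal entry of $\Lambda$ vanishes. Partition $U$, $\Lambda$, and $U^{*}$ conformally along the splitting ``first $n-1$ coordinates versus $n$-th coordinate'':
$$U=\begin{pmatrix} P & q\\ s^{*} & u_{n,n}\end{pmatrix},\qquad \Lambda=\begin{pmatrix}\Lambda_n & 0\\ 0 & 0\end{pmatrix},\qquad U^{*}=\begin{pmatrix} P^{*} & s\\ q^{*} & \overline{u_{n,n}}\end{pmatrix},$$
so that $P$ is the $(n-1)\times(n-1)$ matrix obtained from $U$ by deleting row $n$ and column $n$, and $q\in\mathbb{C}^{\,n-1}$ collects the first $n-1$ entries of the eigenvector $v_n$.

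First I would multiply the three blocks out. Because the bottom-right block of $\Lambda$ is $0$, every term that would involve $q$ or $s$ drops out, and the top-left $(n-1)\times(n-1)$ block of $U\Lambda U^{*}$ is just $P\Lambda_n P^{*}$. Since that block is by definition $M_n$, we obtain $M_n=P\Lambda_n P^{*}$, and hence $\abs{M_n}=\abs{\det P}^{2}\,\abs{\Lambda_n}$, using $\det(P^{*})=\overline{\det P}$.

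Thus the theorem reduces to the single identity $\abs{\det P}^{2}=\abs{u_{n,n}}^{2}$, the one step that needs an idea rather than bookkeeping; I would extract it from another block identity, namely the top-left block of $UU^{*}=I$, which reads $PP^{*}=I_{n-1}-qq^{*}$. Taking determinants, $\abs{\det P}^{2}=\det(I_{n-1}-qq^{*})=1-q^{*}q=1-\lVert q\rVert^{2}$ (the rank-one determinant identity), and $\lVert q\rVert^{2}=1-\abs{u_{n,n}}^{2}$ because $v_n$ is a unit vector with $q$ as its first $n-1$ coordinates; combining the two gives $\abs{\det P}^{2}=\abs{u_{n,n}}^{2}$, hence $\abs{M_n}=\abs{\Lambda_n}\abs{u_{n,n}}^{2}$. (Alternatively one can get $\det P=(\det U)\overline{u_{n,n}}$ by comparing the $(n,n)$ entries in $\operatorname{adj}(U)=(\det U)U^{*}$ and using $\abs{\det U}=1$.) I expect no genuine obstacle beyond keeping the conformal partitions consistent; the only degenerate case is when $0$ is a repeated eigenvalue, where $\Lambda_n$ is already singular and both sides vanish, so the identity holds trivially.
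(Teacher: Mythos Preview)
Your proof is correct and essentially identical to the paper's: both read $M_n = P\Lambda_n P^{*}$ from the block product $U\Lambda U^{*}$ (using $\lambda_n=0$ to kill the cross terms), then obtain $\abs{\det P}^{2}=\abs{u_{n,n}}^{2}$ from the top-left block of $UU^{*}=I$ via the rank-one determinant identity $\det(I_{n-1}-qq^{*})=1-q^{*}q$. The paper's $U_n,\,C_1$ are your $P,\,q$, and it writes the Sylvester step as $\abs{E_{n-1}-C_1\bar{C}'_1}=\abs{E_1-\bar{C}'_1 C_1}$, but the argument is the same.
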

\begin{proof}

Since  $A$ be an $n\times n$ Hermite matrix with $\lambda_n = 0$, there exists an $n\times n$ unitary matrix $U$ such that $U^{-1}AU = \Lambda$ and $\Lambda$ is a diagonal matrix with $\lambda_n=0$. Thus we may suppose that

 \[ A = (a_{i,j}) = \left( \begin{array}{cc}
M_n & B  \\
\bar{B}^{'}& a_{n,n}
\end{array} \right), \,  U =(u_{i,j}) = \left( \begin{array}{cc}
U_n & C_1  \\
C_2& u_{n,n}\\
\end{array}\right),\,  U^{-1} = \left( \begin{array}{cc}
\bar{U}^{'}_n & \bar{C}^{'}_2  \\
\bar{C}^{'}_1& \bar{u}_{n,n}\\
\end{array}\right),\]

\[ \Lambda = \left(
\begin{array}{cc}
\Lambda_n&\\
&0\\
\end{array}\right), \quad \Lambda_n = \left(
\begin{array}{cccc}
\lambda_1 &&&\\
 &\lambda_2&&\\
&&\ddots&\\
&&&\lambda_{n-1}\\
\end{array}\right).\]

We may deduce that
\[A = U\Lambda U^{-1} = \left( \begin{array}{cc}
U_n & C_1  \\
C_2& u_{n,n}\\
\end{array}\right)\left(
\begin{array}{cc}
\Lambda_n&\\
&0\\\end{array}\right) \left( \begin{array}{cc}
\bar{U}^{'}_n & \bar{C}^{'}_2  \\
\bar{C}^{'}_1& \bar{u}_{n,n}\\
\end{array}\right)= \left( \begin{array}{cc}
U_n\Lambda_n\bar{U}^{'}_n & U_n\Lambda_n\bar{C}^{'}_2\\
C_2\Lambda_n\bar{U}^{'}_n& C_2\Lambda_n\bar{C}^{'}_2\\
\end{array}\right).\]

It follows $M_n = U_n\Lambda_n\bar{U}^{'}_n$, and so $\abs{M_n} = \abs{\Lambda_n}\abs{U_n\bar{U}^{'}_n}$. Hence it suffices to prove $\abs{U_n\bar{U}^{'}_n}=\abs{u_{n,n}}^2$, where $\abs{u_{n,n}}^2 = u_n\bar{u}_n$.
Because $U$ is a unitary matrix, we get that

\[ E_n = U\bar{U}^{'} = \left( \begin{array}{cc}
U_n & C_1  \\
C_2& u_{n,n}\\
\end{array}\right)\left( \begin{array}{cc}
\bar{U}^{'}_n & \bar{C}^{'}_2  \\
\bar{C}^{'}_1& \bar{u}_{n,n}\\
\end{array}\right)=\left( \begin{array}{cc}
U_n\bar{U}^{'}_n + C_1\bar{C}^{'}_1 &U_n\bar{C}^{'}_2+ C_1\bar{u}_{n,n}\\
C_2\bar{U}^{'}_n+u_{n,n}\bar{C}^{'}_1& C_2\bar{C}^{'}_2+ u_{n,n}\bar{u}_{n,n}\\
\end{array}\right),\]
thus $\, U_n\bar{U}^{'}_n + C_1\bar{C}^{'}_1 = E_{n-1}$. We derive that $$\abs{U_n\bar{U}^{'}_n} = \abs{E_{n-1}-C_1\bar{C}^{'}_1} = \abs{E_1 - \bar{C}^{'}_1C_1} = \abs{u_{n,n}}^2.$$ Note that $\bar{C}^{'}_1C_1 + \bar{u}_{n,n}u_{n,n} = \sum_{i = 1}^n \bar{u}_{i,n}u_{i,n} = 1$. The proof is complete.
\end{proof}

\end{document}